\theoremstyle{plain}
\newtheorem{thm}{Theorem}[section]
\newtheorem{lem}[thm]{Lemma}
\newtheorem{prop}[thm]{Proposition}
\newtheorem{cor}[thm]{Corollary}
\theoremstyle{definition}
\newtheorem{rem}[thm]{Remark}
\newcommand{\R}{\mathbb R}
\newcommand{\Z}{\mathbb Z}
\newcommand{\nn}{\vskip 0.2cm}
\newcommand{\n}{\vskip 0.1cm}
\begin{document}

\title [\ ] {Small Cover and Halperin-Carlsson Conjecture - II}

\author{Li Yu}
\address{Department of Mathematics and IMS, Nanjing University, Nanjing, 210093, P.R.China
  \newline
     \textit{and}
 \newline
    \qquad  Department of Mathematics, Osaka City University, Sugimoto,
     Sumiyoshi-Ku, Osaka, 558-8585, Japan}

 \email{yuli@nju.edu.cn}

\thanks{This work is partially supported by
the Scientific Research Foundation for the Returned Overseas Chinese
Scholars, State Education Ministry and by the Japanese Society for
the Promotion of Sciences (JSPS grant no. P10018).}


\keywords{free torus action, Halperin-Carlsson conjecture, small
cover, moment-angle manifold, glue-back construction}

\subjclass[2000]{57R22, 57S17, 57S10, 55R91}

 \begin{abstract}
   For a small cover $Q^n$ and any principal $(\Z_2)^m$-bundle $M^n$ over $Q^n$,
     it was shown in~\cite{Yu2010-2} that the total sum of $\Z_2$-Betti numbers
      of $M^n$ is at least $2^m$. In this paper,
      we prove that when $M^n$ is connected, the total sum of $\Z_2$-Betti numbers
      of such an $M^n$ exactly equals $2^m$ if and only if
      $M^n$ is homeomorphic to a product of
     spheres, and $Q^n$ in this case must be a generalized real Bott
     manifold (or equivalent, $Q^n$ is a small cover over a product of
     simplices).
 \end{abstract}

\maketitle

  \section{Introduction}

  let $\Z_2$ denote the quotient (additive) group $\Z\slash  2\Z$.
   Based on some basic construction of principal $(\Z_2)^m$-bundles over
   smooth manifolds introduced in~\cite{Yu2010}, the following
   theorem is proved in~\cite{Yu2010-2}. \vskip .2cm

   \begin{thm}[Yu~\cite{Yu2010-2}] \label{thm:Old-Result}
   If $(\Z_2)^m$ acts freely on a manifold $M^n$ whose orbit space is a small cover,
      we must have:
     \begin{equation} \label{Equ:Old-Result}
       \sum^{\infty}_{i=0} \dim_{\Z_2} H^i(M^n,\Z_2) \geq
      2^m.
     \end{equation}
   \end{thm}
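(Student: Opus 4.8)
The plan is to reduce the inequality to a single ``universal'' case and then to prove that. First, one may assume $M^n$ is connected: since $Q^n$ is connected, if $(\Z_2)^{m'}\subseteq(\Z_2)^m$ is the image of the monodromy $\pi_1(Q^n)\to(\Z_2)^m$, then $M^n$ is a disjoint union of $2^{m-m'}$ copies of a connected principal $(\Z_2)^{m'}$-bundle $N$ over $Q^n$, and $(\Z_2)^{m'}$ acts freely on $N$ with quotient $Q^n$; as $\sum_i\dim_{\Z_2}H^i(M^n;\Z_2)=2^{m-m'}\sum_i\dim_{\Z_2}H^i(N;\Z_2)$, the bound for $M^n$ follows from the connected case applied to $N$. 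So assume $M^n$ connected, write $Q^n=Q(P^n,\lambda)$ with $P^n$ a simple $n$-polytope with $f$ facets $F_1,\dots,F_f$, and let $\mathbb{R}\mathcal{Z}_{P^n}$ be the real moment-angle manifold of $P^n$, so that $(\Z_2)^f$ acts on $\mathbb{R}\mathcal{Z}_{P^n}$ with $\mathbb{R}\mathcal{Z}_{P^n}/\ker\bar\lambda=Q^n$, where $\bar\lambda\colon(\Z_2)^f\to(\Z_2)^n$ is the linear extension of $\lambda$; thus $\mathbb{R}\mathcal{Z}_{P^n}\to Q^n$ is a connected principal $(\Z_2)^{f-n}$-bundle.

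Because the classifying map $Q^n\to B(\Z_2)^m$ of $M^n$ factors through the mod-$2$ abelianization $\pi_1(Q^n)\to H_1(Q^n;\Z_2)=(\Z_2)^{f-n}$, the covering $\mathbb{R}\mathcal{Z}_{P^n}\to Q^n$ factors through $M^n$: there is a tower $\mathbb{R}\mathcal{Z}_{P^n}\to M^n\to Q^n$ in which $\mathbb{R}\mathcal{Z}_{P^n}\to M^n$ is a principal $(\Z_2)^{f-n-m}$-bundle (connectedness of $M^n$ forces the $m$ classes defining it in $H^1(Q^n;\Z_2)$ to be linearly independent, so $m\le f-n$). For any double cover $\widetilde X\to X$ of connected spaces, classified by $\alpha\in H^1(X;\Z_2)$, the Gysin sequence gives
\begin{equation*}
\sum_i\dim_{\Z_2}H^i(\widetilde X;\Z_2)=2\sum_i\dim_{\Z_2}H^i(X;\Z_2)-2\sum_i\mathrm{rank}\big(\cup\alpha\colon H^i(X;\Z_2)\to H^{i+1}(X;\Z_2)\big),
\end{equation*}
so in particular $\sum_i\dim_{\Z_2}H^i(\widetilde X;\Z_2)\le 2\sum_i\dim_{\Z_2}H^i(X;\Z_2)$; iterating along a tower of $f-n-m$ double covers gives $\sum_i\dim_{\Z_2}H^i(\mathbb{R}\mathcal{Z}_{P^n};\Z_2)\le 2^{\,f-n-m}\sum_i\dim_{\Z_2}H^i(M^n;\Z_2)$. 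Hence it suffices to prove the universal inequality
\begin{equation*}
\sum_i\dim_{\Z_2}H^i(\mathbb{R}\mathcal{Z}_{P^n};\Z_2)\ \ge\ 2^{\,f-n}
\end{equation*}
for every simple $n$-polytope with $f$ facets; this is precisely Theorem~\ref{thm:Old-Result} for the particular bundle $\mathbb{R}\mathcal{Z}_{P^n}\to Q^n$, so the reduction is in fact an equivalence.

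For the universal inequality I would use the identification $H^*(\mathbb{R}\mathcal{Z}_{P^n};\Z_2)\cong\mathrm{Tor}^{*}_{\Z_2[v_1,\dots,v_f]}\big(\Z_2[\partial(P^n)^{*}],\Z_2\big)$, with $\Z_2[\partial(P^n)^{*}]$ the Stanley--Reisner ring of the simplicial $(n-1)$-sphere dual to $P^n$ --- a Gorenstein quotient of $\Z_2[v_1,\dots,v_f]$ of codimension and projective dimension $f-n$. The inequality then becomes the assertion that the total Betti number (total $\Z_2$-dimension of $\mathrm{Tor}$) of this perfect module is at least $2^{f-n}$, i.e.\ the total-rank form of the Buchsbaum--Eisenbud--Horrocks problem for such rings; by Hochster's formula this is the combinatorial statement $\sum_{W\subseteq\{1,\dots,f\}}\dim_{\Z_2}\widetilde H^*\big((\partial(P^n)^{*})_W;\Z_2\big)\ge 2^{\,f-n}$, which I would try to prove by an inclusion--exclusion / M\"obius-type argument over the face poset of $P^n$, the base case $P^n=\Delta^n$ (where $\mathbb{R}\mathcal{Z}_{\Delta^n}\cong S^n$) already realizing equality. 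This lower bound is where the real work lies; by contrast the surrounding steps --- the reduction to connected $M^n$, the covering-space bookkeeping, the Gysin estimate, and the structure theory of small covers and real moment-angle manifolds --- are routine.
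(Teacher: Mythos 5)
Your reduction is sound, and it is essentially the same reduction the paper itself relies on: passing to a connected component, observing that every connected principal $\Z_2$-torus bundle over $Q^n$ is covered by the maximal one $\R\mathcal{Z}_{P^n}\rightarrow Q^n$ (this is Lemma 2.5 of~\cite{Yu2010-2}, quoted in Section 3), and running the Gysin inequality for a tower of connected double covers (exactly Lemma~\ref{Lem:Double_Covering} of this paper, iterated). So the bookkeeping up to the displayed ``universal inequality'' is correct, and you are right that the statement is equivalent to $\mathrm{hrk}(\R\mathcal{Z}_{P^n},\Z_2)\geq 2^{f-n}$.

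The problem is that this universal inequality is the entire content of the theorem, and you do not prove it: you state Hochster's formula, translate the bound into $\sum_{W}\dim_{\Z_2}\widetilde H^{*}((\partial (P^n)^{*})_W;\Z_2)\geq 2^{f-n}$, and then say you ``would try'' an inclusion--exclusion/M\"obius argument. That combinatorial lower bound is the Halperin--Carlsson conjecture for the canonical free $(\Z_2)^{f-n}$-action on the real moment-angle manifold --- i.e.\ the hardest instance of the very statement being proved --- and it is a nontrivial theorem in its own right (it is what the M\"obius-transform arguments of Cao--L\"u and Ustinovsky, or the inductive argument of~\cite{Yu2010-2} via restriction of the bundle to facial submanifolds $\pi_\mu^{-1}(F_i)$, are designed to establish; the latter induction on the dimension of $P^n$ is the route the source paper takes and is the mechanism this paper reuses in Section~\ref{Sec3}). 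As written, every step you actually carry out is the part you yourself label routine, and the step where, in your words, ``the real work lies'' is missing; the proof is therefore incomplete.
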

    This provides some new evidence to support the Halperin-Carlsson conjecture
    for free $(\Z_2)^m$-actions which claims that
    if $(\Z_2)^m$ can act freely on a finite CW-complex $X$,
    we should have
      \begin{equation} \label{Equ:HCC}
        \sum^{\infty}_{i=0} \dim_{\Z_2} H^i(X,\Z_2) \geq 2^m
      \end{equation}
    The reader is referred to~\cite{Halperin85}---~\cite{Hanke09}
    for more information about the Halperin-Carlsson conjecture.
    In particular, it is interesting see what kind of free $(\Z_2)^m$-actions and $X$
    can make the equality in~\eqref{Equ:HCC} hold. For the sake of
     brevity, we introduce the following notation.
    \[  \mathrm{hrk}(X,\Z_2) :=  \sum^{\infty}_{i=0} \dim_{\Z_2} H^i(X,\Z_2). \]
    We call $\mathrm{hrk}(X,\Z_2)$ the \textit{total $\Z_2$-cohomology rank}
    of $X$.
     In this paper, we will think of a space $X$ with a free $(\Z_2)^m$-action as
      a principal $(\Z_2)^m$-bundle over some base space. The main result of this paper
      is stated as following.
  \vskip .2cm

   \begin{thm} \label{thm:Main}
   For a small cover $Q^n$, there exists some principal
     $(\Z_2)^m$-bundle $M^n$ over $Q^n$ with the total $\Z_2$-cohomology rank
      $\mathrm{hrk}(M^n,\Z_2) =2^m$ if and only if $Q^n$ is a small cover over a product of
     simplices.
   \end{thm}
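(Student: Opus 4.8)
The plan is to split the argument into a reduction to the connected case, an easy construction for the ``if'' direction, and an analysis of the equality case of Theorem~\ref{thm:Old-Result} for the ``only if'' direction. For the reduction, suppose $M^n$ is a principal $(\Z_2)^m$-bundle over $Q^n$ with $\mathrm{hrk}(M^n,\Z_2)=2^m$ and $M^n$ has $2^j$ connected components; then each component $M'$ is the total space of a connected principal $(\Z_2)^{m-j}$-bundle over $Q^n$ and $\mathrm{hrk}(M^n,\Z_2)=2^{j}\,\mathrm{hrk}(M',\Z_2)$, so Theorem~\ref{thm:Old-Result} forces $\mathrm{hrk}(M',\Z_2)=2^{m-j}$. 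Hence it is enough to show that a small cover $Q^n$ admits a \emph{connected} principal $(\Z_2)^m$-bundle $M^n$ with $\mathrm{hrk}(M^n,\Z_2)=2^m$ if and only if the base polytope is a product of simplices; for such a bundle one automatically has $m\leq\dim_{\Z_2}H^1(Q^n,\Z_2)$.

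For the ``if'' direction I would use the real moment-angle manifold. Suppose $Q^n$ lies over $P^n=\Delta^{n_1}\times\cdots\times\Delta^{n_k}$, where after deleting the $\Delta^0$-factors we may assume each $n_i\geq 1$. Since the real moment-angle construction turns products of polytopes into products and $\Delta^{n_i}$ into the sphere $S^{n_i}$, the real moment-angle manifold of $P^n$ is $\R\mathcal{Z}_{P^n}\cong S^{n_1}\times\cdots\times S^{n_k}$. The polytope $P^n$ has $n+k$ facets, and the characteristic function of $Q^n$ defines a surjection $(\Z_2)^{n+k}\to(\Z_2)^n$ whose kernel $N\cong(\Z_2)^k$ acts freely on $\R\mathcal{Z}_{P^n}$ with quotient $Q^n$; thus $M^n:=\R\mathcal{Z}_{P^n}$ is a connected principal $(\Z_2)^k$-bundle over $Q^n$ with
\[
\mathrm{hrk}(M^n,\Z_2)=\prod_{i=1}^{k}\mathrm{hrk}(S^{n_i},\Z_2)=2^{k},
\]
by multiplicativity of total $\Z_2$-cohomology rank over direct products, so the bundle $M^n$ with $m=k$ does the job (and if $P^n$ is a point one takes $M^n=(\Z_2)^1$ over $Q^n=\mathrm{pt}$).

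For the ``only if'' direction, assume $M^n$ is a connected principal $(\Z_2)^m$-bundle ($m\geq1$) over a small cover $Q^n$ lying over a simple polytope $P^n$ with characteristic function $\lambda$, and $\mathrm{hrk}(M^n,\Z_2)=2^m$. I would first present $M^n$ by the glue-back construction of~\cite{Yu2010} over $P^n$ itself: there is a function $\mu$ on the facets of $P^n$ valued in $(\Z_2)^{n}\times(\Z_2)^{m}$ whose first $n$ coordinates give $\lambda$ and whose last $m$ coordinates classify the bundle, and $M^n$ is obtained by gluing $2^{n+m}$ copies of $P^n$ along the facets according to $\mu$; this carries an effective $(\Z_2)^{n+m}$-action with orbit space $P^n$, the bundle $(\Z_2)^m$ being the free sub-action with quotient $Q^n$. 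With this presentation, $H^*(M^n,\Z_2)$ is computed from the same combinatorial complex attached to the face poset of $P^n$ and the twisting $\mu$ that underlies the proof of Theorem~\ref{thm:Old-Result}, where $\mathrm{hrk}(M^n,\Z_2)\geq 2^m$ is obtained by an inductive/recursive argument over the combinatorics of $P^n$ (for instance by removing a facet or peeling off a factor and comparing the contributions that arise). The plan is to rerun this argument while tracking its equality cases.

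The rigidity claim I would need is this: $\mathrm{hrk}(M^n,\Z_2)=2^m$ forces, at each step of the induction, that the piece removed is a simplex, so that $P^n$ is combinatorially a product of simplices (equivalently $Q^n$ is a generalized real Bott manifold, and in fact $M^n$ is homeomorphic to the product of spheres $\R\mathcal{Z}_{P^n}$). This rigidity step is the main obstacle. Its delicate point is that $\mathrm{hrk}(M^n,\Z_2)$ genuinely depends on the twisting $\mu$ and not on $P^n$ alone, so one cannot argue combinatorially with $P^n$ in isolation; instead I expect to show that a single non-simplex face of $P^n$ forces the corresponding full subcomplex of the dual boundary complex to carry enough reduced $\Z_2$-homology that $\mathrm{hrk}(M^n,\Z_2)>2^m$ for \emph{every} admissible choice of $\mu$ --- that is, to rule out the possibility that a cleverly chosen bundle over a non-product small cover reaches the Halperin-Carlsson minimum. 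Once $P^n$ is forced to be a product of simplices the ``only if'' direction is established, which together with the reduction and the ``if'' direction proves the theorem.
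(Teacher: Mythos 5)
The reduction to the connected case and the ``if'' direction are fine and match the paper's strategy (the paper builds $M^n$ from $\R\mathcal{Z}_{P^n}\cong S^{n_1}\times\cdots\times S^{n_k}$ in exactly the same way). The problem is the ``only if'' direction, which is the substantive half of the theorem: you state the rigidity claim as a goal (``I expect to show\ldots'') but do not prove it, and the route you sketch is both vague and, as written, misaimed.

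Two concrete issues. First, your sketch suggests that you need each ``piece removed'' in the induction to be a simplex, and that a single ``non-simplex face'' of $P^n$ should be obstructive. But the faces of a product of simplices are themselves products of simplices, not simplices, so ``all faces are simplices'' is the wrong target (it would characterize the simplex, not a product of simplices). The correct combinatorial statement, and the one the paper actually uses, is an unpublished result of S.~Choi (Theorem~\ref{thm:Polytope}): a simple polytope of dimension $\geq 3$ is a product of simplices if and only if \emph{every $2$-dimensional face is a triangle or a square}. Without some such characterization you have no way to pass from local face data to the global conclusion that $P^n$ is a product of simplices. Second, the mechanism the paper uses to reach the $2$-dimensional condition is not a rerun of the inequality proof with equality tracked; it is a geometric restriction argument. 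One first normalizes $\lambda$ to a maximally independent $(\Z_2)^k$-coloring $\lambda^*$ with $\mathrm{hrk}(M(V^n,\lambda^*),\Z_2)=2^k$, then uses the inequality $\mathrm{hrk}(M(V^n,\lambda^*),\Z_2)\geq \mathrm{hrk}(\xi_{\lambda^*}^{-1}(\pi_\mu^{-1}(F_i)),\Z_2)$ (from the proof of Theorem~\ref{thm:Old-Result}) together with Theorem~\ref{thm:Old-Result} applied to the facial submanifold $\pi_\mu^{-1}(F_i)$ to force $\mathrm{hrk}(\xi_{\lambda^*}^{-1}(\pi_\mu^{-1}(F_i)),\Z_2)=2^k$ as well. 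Iterating this down the face poset produces, over the preimage of every $2$-face $\mathbf{f}$, a principal $\Z_2$-torus bundle meeting the Halperin--Carlsson bound with equality; the $n=2$ Euler characteristic computation ($\mathrm{hrk}(M^2,\Z_2)=4-\chi(M^2)=2^m(\chi(Q^2)+1)$ forces $\chi(Q^2)\in\{0,1\}$, hence torus, Klein bottle, or $\R P^2$) then pins $\mathbf{f}$ down to a triangle or a square, and Choi's theorem finishes. Your proposal contains neither the restriction-to-faces step nor the dimension-two Euler characteristic argument nor Choi's theorem, so the necessity half remains open in your write-up.
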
 \vskip .2cm
     Recall that an $n$-dimensional \textit{small cover} is a
        closed $n$-manifold with a locally standard $(\Z_2)^n$-action whose orbit space
        can be identified with a simple polytope (see~\cite{DaJan91}).
    \vskip .2cm

   The most obvious examples of $Q^n$ that satisfy the conditions in Theorem~\ref{thm:Main}
    are products of real projective spaces. But in general,
   $Q^n$ could be the total space $B_m$ of an iterated real projective
   space bundle as following:
   \[  B_m \overset{\pi_m}{\longrightarrow} B_{m-1} \overset{\pi_{m-1}}{\longrightarrow}
    \cdots \overset{\pi_2}{\longrightarrow} B_1 \overset{\pi_1}{\longrightarrow} B_0
   =\ \{ \text{a point} \} , \]
   where each $B_i$ ($1\leq i \leq m$) is the projectivization of
   the Whitney sum of a finite collection of real line bundles over $B_{i-1}$.
   The $B_m$ is called a \textit{generalized real Bott manifold}
      in~\cite{SuyMasDong10}. In fact, the Remark $6.5$ in~\cite{SuyMasDong10}
      told us the following. \vskip .2cm

     \begin{prop}[Choi, Masuda and Suh~\cite{SuyMasDong10}] The set of all generalized real Bott
      manifolds are exactly the set of all small covers over
      products of simplices. \vskip .2cm
    \end{prop}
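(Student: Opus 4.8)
The plan is to prove the two set-inclusions separately, using throughout that a small cover over a simple polytope $P$ is encoded by a \emph{characteristic function} $\lambda$ on the facets of $P$ (a non-singular assignment of vectors in $(\Z_2)^{\dim P}$) and is recovered from $(P,\lambda)$ by the glue-back quotient $P\times(\Z_2)^{\dim P}/\!\sim$; recall also that each facet $F$ carries a \emph{facial} line bundle $\rho_F$ equivariant for the $(\Z_2)^{\dim P}$-action, and that the mod $2$ cohomology of a small cover is generated in degree one by the classes $w_1(\rho_F)$.

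First I would show that every generalized real Bott manifold $B_m$ is a small cover over $\prod_{i=1}^m\Delta^{n_i}$, by induction on the height $m$; the case $B_0=\{\mathrm{pt}\}$ is trivial. Assume $B_{m-1}$ is a small cover over $P_{m-1}=\prod_{i=1}^{m-1}\Delta^{n_i}$ carrying its locally standard $(\Z_2)^{N_{m-1}}$-action, $N_{m-1}=n_1+\cdots+n_{m-1}$, and write $B_m=\R P(L_1\oplus\cdots\oplus L_{n_m+1})$. Since the first Stiefel--Whitney class of any real line bundle over $B_{m-1}$ is a sum of facial classes, each $L_j$ is isomorphic to a tensor product of facial line bundles and so inherits a lift of the $(\Z_2)^{N_{m-1}}$-action; hence $B_m$ carries that action together with the $(\Z_2)^{n_m}$-action rescaling the homogeneous fibre coordinates $[x_1:\cdots:x_{n_m+1}]$ by signs modulo the overall sign. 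Over a standard $(\Z_2)^{N_{m-1}}$-chart $U\subset B_{m-1}$ every $L_j|_U$ is equivariantly trivial up to a character, so $B_m|_U\cong U\times\R P^{n_m}$ with the combined action, and a direct computation at a fixed point shows this action is, after an automorphism of $(\Z_2)^{N_{m-1}+n_m}$, the standard linear one on a Euclidean chart. Thus $B_m$ is locally standard with orbit space $P_{m-1}\times\Delta^{n_m}=\prod_{i=1}^m\Delta^{n_i}$.

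Conversely, let $Q$ be a small cover over $P=\prod_{i=1}^m\Delta^{n_i}$, $N=\sum n_i$, with characteristic function $\lambda$; label the facets of the $i$-th factor $F_{i,0},\dots,F_{i,n_i}$, where $F_{i,k}$ misses the vertex $u_{i,k}$. At the base vertex $v=(u_{1,0},\dots,u_{m,0})$ the $N$ values $\lambda(F_{i,j})$, $1\le j\le n_i$, form a basis of $(\Z_2)^N$, so after a change of basis $\lambda(F_{i,j})=e_{i,j}$, and evaluating non-singularity at the vertices obtained from $v$ by altering a single coordinate forces $\lambda(F_{i,0})=e_{i,1}+\cdots+e_{i,n_i}+w_i$ with $w_i\in\operatorname{span}\{e_{i',j'}:i'\ne i\}$. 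Form the digraph $D$ on $\{1,\dots,m\}$ with an arrow $i\to i'$ whenever $w_i$ has a nonzero $e_{i',\cdot}$-component. The crux is that $D$ is acyclic: were there a directed cycle, pick one of minimal length $i_1\to\cdots\to i_k\to i_1$, so that it has no chords and hence the only cycle-coordinates occurring in $w_{i_\ell}$ are $e_{i_{\ell+1},\cdot}$-coordinates; choosing in each factor $i_\ell$ a vertex $u_{i_\ell,a_\ell}$ with $a_\ell$ an index actually hit by $w_{i_{\ell-1}}$ (and $u_{i,0}$ in the remaining factors) produces a vertex of $P$ at which the values $\lambda(F_{i_\ell,0})$, reduced modulo the $N-k$ coordinate vectors already occurring among the facets through that vertex, become $e_{i_\ell,a_\ell}+e_{i_{\ell+1},a_{\ell+1}}$ and therefore sum to zero, contradicting non-singularity. (For a $3$-cycle in the $3$-cube this is the identity $\lambda(F_{1,0})+\lambda(F_{2,0})+\lambda(F_{3,0})=0$.)

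A topological sort of $D$ now reorders the $m$ factors so that $w_i\in\operatorname{span}\{e_{i',j'}:i'<i\}$ for every $i$; that is, $\lambda$ becomes block upper triangular with identity diagonal blocks. Unwinding this normalized data from the top down: the vector $w_m$ determines, via the standard pairing, a character of the $(\Z_2)^{N_{m-1}}$-action on the sub-small-cover $Q'$ over $\prod_{i<m}\Delta^{n_i}$ cut out by the lower-right block, hence a real line bundle $L$ over $Q'$, and the glue-back formula identifies $Q$ with the real projective bundle $\R P(\underline{\R}^{\,n_m}\oplus L)$ over $Q'$; since $Q'$ is a generalized real Bott manifold by the induction on $m$, so is $Q$. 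The chief obstacle throughout is the acyclicity of $D$: for a hypothetical directed cycle one must pin down exactly the right vertex of each simplex factor making the relevant facet-values linearly dependent over $\Z_2$. The remaining ingredients — the local-triviality computation in the forward inclusion, and the top-down unwinding of the block-triangular characteristic function via the glue-back construction — are routine inductions and bookkeeping.
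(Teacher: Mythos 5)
The paper does not actually prove this proposition; it simply imports it as Remark 6.5 of Choi--Masuda--Suh \cite{SuyMasDong10}. Your proposal supplies a genuine proof, and it is correct; it is in substance a reconstruction of the Choi--Masuda--Suh argument, with one presentational difference worth noting. Their route to the hard inclusion is to show that the ``vector matrix'' of a small cover over $\Delta^{n_1}\times\cdots\times\Delta^{n_m}$ is conjugate to a unipotent upper-triangular one; your acyclic-digraph formulation is an equivalent and arguably cleaner packaging of that triangularization step, and the crux --- that a minimal directed cycle has no chords, so that after reducing modulo the coordinate vectors present at the carefully chosen vertex each $\lambda(F_{i_\ell,0})$ collapses to $e_{i_\ell,a_\ell}+e_{i_{\ell+1},a_{\ell+1}}$ and the cycle sum vanishes --- is handled correctly (loops are excluded since $w_i$ has no $e_{i,\cdot}$-component, and the components of $w_{i_\ell}$ in non-cycle factors are killed because those factors sit at their base vertex). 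Two small remarks. First, your normalization produces at each stage a bundle $\R P(\underline{\R}^{\,n_i}\oplus L_i)$ with only \emph{one} nontrivial summand; this looks stronger than the definition of a generalized real Bott manifold but is not a problem --- it reflects the facts that projectivization is insensitive to tensoring all summands by a common line bundle and that the basis change at the chosen vertex is a weakly equivariant homeomorphism of small covers, so you have in fact proved a slightly sharper normal form. Second, the steps you flag as routine (local standardness of the combined action in the forward inclusion, and the identification of the glue-back quotient with the stated projective bundle) are indeed standard but are exactly where the details live in \cite{SuyMasDong10}; as a sketch the argument is complete.
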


    \begin{cor}
      A small cover $Q^n$ satisfies the
      condition in Theorem~\ref{thm:Main} if and only if $Q^n$
      is a generalized real Bott manifold.
   \end{cor}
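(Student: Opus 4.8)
The plan is to deduce the Corollary by simply concatenating the biconditional of Theorem~\ref{thm:Main} with the set-theoretic identification recorded in the Proposition of Choi, Masuda and Suh. First I would fix the reading of the phrase ``the condition in Theorem~\ref{thm:Main}'': interpreted as a property of $Q^n$ alone, it is exactly the statement that $Q^n$ is a small cover over a product of simplices, since Theorem~\ref{thm:Main} asserts that the existence of a principal $(\Z_2)^m$-bundle $M^n$ over $Q^n$ with $\mathrm{hrk}(M^n,\Z_2)=2^m$ holds \emph{if and only if} $Q^n$ is such a small cover. Thus ``$Q^n$ satisfies the condition in Theorem~\ref{thm:Main}'' and ``$Q^n$ is a small cover over a product of simplices'' are, by Theorem~\ref{thm:Main} itself, one and the same assertion about $Q^n$.

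Next I would invoke the Proposition, which states that the class of generalized real Bott manifolds coincides, as a set, with the class of small covers over products of simplices. Substituting this identification into the reformulation obtained in the previous step gives immediately: $Q^n$ satisfies the condition in Theorem~\ref{thm:Main} if and only if $Q^n$ is a generalized real Bott manifold. This is precisely the claim of the Corollary, and the argument terminates here.

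I do not expect any real obstacle in this proof: the Corollary is a formal consequence of two results already established (Theorem~\ref{thm:Main} and the Proposition), and the only point that needs care is the consistent parsing of ``the condition in Theorem~\ref{thm:Main}'' as a property of $Q^n$ rather than as a statement about a particular bundle $M^n$. Once that is made explicit, the two biconditionals compose directly, with no additional hypotheses, no dimension restrictions, and no connectivity assumption on $Q^n$, so the whole proof reduces to a one-line syntactic deduction.
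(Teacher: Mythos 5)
Your proposal is correct and matches the paper's (implicit) reasoning exactly: the Corollary follows immediately by composing the biconditional of Theorem~\ref{thm:Main} with the identification of classes given in the Proposition of Choi, Masuda and Suh, and the paper indeed treats it as an unproved formal consequence of those two statements.
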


     \vskip .2cm

      Moreover, we can prove the following.\vskip .2cm

    \begin{thm} \label{thm:Main-2}
    Suppose $M^n$ is a connected manifold. Then $M^n$ is a principal
     $(\Z_2)^m$-bundle over some small cover with $\mathrm{hrk}(M^n,\Z_2)
     =2^m$ if and only if $M^n$ is homeomorphic to a product of spheres.
   \end{thm}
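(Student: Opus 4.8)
The plan is to deduce Theorem~\ref{thm:Main-2} from Theorem~\ref{thm:Main} together with its proof (in particular the description of the bundle $M^n$ that attains the equality). The ``if'' direction is the easy half: if $M^n = S^{n_1} \times \cdots \times S^{n_m}$ is a product of spheres, then using the standard free antipodal action on each factor one builds a free $(\Z_2)^m$-action whose orbit space is $\R P^{n_1} \times \cdots \times \R P^{n_m}$, which is a small cover over a product of simplices; and by the Künneth formula $\mathrm{hrk}(M^n,\Z_2) = \prod_i \mathrm{hrk}(S^{n_i},\Z_2) = \prod_i 2 = 2^m$. (One must also check the borderline case $n_i = 0$, i.e.\ that $M^n$ is allowed to be disconnected a priori, but since we assume $M^n$ connected all $n_i \geq 1$, so this is harmless, though it does mean the ``$S^0$'' factors must be absorbed into the group-theoretic bookkeeping.)

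For the ``only if'' direction, suppose $M^n$ is connected and is a principal $(\Z_2)^m$-bundle over a small cover $Q^n$ with $\mathrm{hrk}(M^n,\Z_2) = 2^m$. By Theorem~\ref{thm:Main}, $Q^n$ is a small cover over a product of simplices $\Delta^{n_1} \times \cdots \times \Delta^{n_k}$, equivalently (by the Corollary) a generalized real Bott manifold $B_k$. I would then argue by induction on the number of factors $k$ in the iterated projective bundle tower, using the fact — established in the course of proving Theorem~\ref{thm:Main} — that the equality-attaining principal bundle $M^n$ over such a $Q^n$ is forced to be a very specific one: pulling back the tower structure, $M^n$ should itself fiber as an iterated sphere bundle, and the hypothesis $\mathrm{hrk} = 2^m$ together with the Leray–Hirsch/spectral-sequence estimate (each projectivization step multiplies the total rank by the number of line-bundle summands, while each sphere-bundle step multiplies it by $2$ only when the bundle is trivial in $\Z_2$-cohomology) forces every sphere bundle in the tower for $M^n$ to be cohomologically trivial, hence (by the connectedness and the classification of such bundles, or by an explicit Stiefel–Whitney class computation showing all the twisting classes vanish mod~$2$) actually a product bundle. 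Peeling off one factor at a time then exhibits $M^n$ as homeomorphic to $S^{n_1} \times B'$ where $B'$ is again an equality case with $k-1$ factors, and induction finishes it.

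The main obstacle is the ``only if'' direction's rigidity step: showing that $\mathrm{hrk}(M^n,\Z_2) = 2^m$ actually forces the relevant bundles to untwist, rather than merely forcing their total Betti number to be small. The subtlety is that a nontrivial sphere bundle over a base $B$ with $\mathrm{hrk}(B,\Z_2) = 2^{m-1}$ could in principle still have $\mathrm{hrk} = 2^m$ if the Gysin sequence happens not to lose rank — so one needs to show that over a generalized real Bott manifold this cannot happen unless the bundle is trivial. I expect this to come down to analyzing the $\Z_2$-cohomology ring structure (which for generalized real Bott manifolds is an explicit quotient of a polynomial ring, cf.~\cite{SuyMasDong10}) and showing that the Euler-class term in the Gysin sequence is a nonzerodivisor precisely when the bundle is twisted, forcing a rank drop; alternatively, one invokes the structure theory already developed for Theorem~\ref{thm:Main} and only needs to upgrade ``$M^n$ has the $\Z_2$-cohomology of a product of spheres'' to ``$M^n$ is homeomorphic to a product of spheres,'' which for iterated sphere bundles with vanishing obstruction classes is standard. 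A secondary, more bookkeeping-type obstacle is matching the bundle rank $m$ of $M^n \to Q^n$ with the number of sphere factors, since $Q^n$ itself carries a free $(\Z_2)^{?}$-action in its guise as a small cover and one must be careful which group is acting; I would handle this by working throughout with the explicit glue-back description of principal $(\Z_2)^m$-bundles from~\cite{Yu2010}.
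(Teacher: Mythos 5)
Your ``if'' direction is fine, but your ``only if'' direction has a genuine gap, and it is exactly the one you flag yourself: you never establish the rigidity step that $\mathrm{hrk}(M^n,\Z_2)=2^m$ forces the bundles to untwist. The proposal as written is a plan with a hole in the middle, not a proof. Moreover there is a preliminary problem before you even reach that hole: it is not clear that $M^n$ inherits an iterated-sphere-bundle structure from the Bott-tower structure of $Q^n$ at all. The free $(\Z_2)^m$-action on $M^n$ has no a priori compatibility with the projectivization maps $\pi_i:B_i\to B_{i-1}$ of the Bott tower, so the pullback does not obviously stratify $M^n$ into sphere bundles over intermediate stages, and your ``peel off one factor'' induction does not get off the ground. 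You would need a separate argument relating the principal $(\Z_2)^m$-bundle structure to the tower.

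The paper's proof sidesteps both difficulties by not working with the Bott tower at all. It uses the glue-back description: pick the $\Z_2$-core $V^n$ of $Q^n$ coming from the $r$ facets of $\Delta^I=\Delta^{n_1}\times\cdots\times\Delta^{n_r}$ missed by a fixed vertex, write the connected $M^n$ as $M(V^n,\lambda)$ with $m=\mathrm{rank}(\lambda)\le r$, and then enlarge the coloring one coordinate at a time to get a chain of \emph{double coverings}
\[
  \R\mathcal{Z}_{\Delta^I}\cong K_{r-m}\longrightarrow K_{r-m-1}\longrightarrow\cdots\longrightarrow K_1\longrightarrow K_0=M^n,
\]
where $\R\mathcal{Z}_{\Delta^I}=S^{n_1}\times\cdots\times S^{n_r}$ is the real moment-angle manifold (the identification $K_{r-m}\cong\R\mathcal{Z}_{\Delta^I}$ being forced because both are connected principal $(\Z_2)^r$-bundles over the same $Q^n$). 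The rigidity comes from Lemma~3.2: for a double cover $\widetilde N\to N$ of closed connected manifolds, $\mathrm{hrk}(\widetilde N,\Z_2)\le 2\,\mathrm{hrk}(N,\Z_2)$ with equality iff the cover is trivial; the proof is Gysin plus Poincar\'e duality (a nonzero Euler class $e_\xi\in H^1(N,\Z_2)$ pairs nontrivially with some $\alpha\in H^{n-1}$, so $\phi_{n-1}\ne 0$). Since $\mathrm{hrk}(K_0)=2^m$ and $\mathrm{hrk}(K_{r-m})=2^r$, each step must saturate the bound, hence each cover is trivial, but each $K_j$ is connected --- contradiction unless $r=m$, whence $M^n\cong\R\mathcal{Z}_{\Delta^I}$ on the nose. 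This is precisely the ``Euler class is a nonzerodivisor'' phenomenon you speculated about, but applied to the double covers in the glue-back chain (where Poincar\'e duality on the closed base $N$ does all the work) rather than to the sphere bundles in a Bott tower (where the needed ring-theoretic nondegeneracy would be much harder to verify). If you want to rescue your approach, you would at minimum need to prove the analogue of Lemma~3.2 for sphere bundles over generalized real Bott manifolds, and separately justify why $M^n$ carries such a tower; the paper's route avoids both.
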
 \vskip .2cm

     Obviously, if $M^n$ is homeomorphic to a product of spheres
       $S^{n_1}\times \cdots \times S^{n_k}$, then the product
      action of the antipodal map of each $S^{n_i}$ defines a free
        $(\Z_2)^k$ action on $M^n$ whose orbit space is
        $\R P^{n_1} \times \cdots\times \R P^{n_k}$. And in fact,
         $\mathrm{hrk}(M^n,\Z_2) = 2^k$ and $\R P^{n_1} \times \cdots \times \R P^{n_k}$
         is a small cover. This proves the sufficiency part of
         Theorem~\ref{thm:Main-2}.
           But the necessity part of Theorem~\ref{thm:Main-2}
          is not trivial (see Section~\ref{Sec3}).\vskip .2cm

   \begin{rem} \label{Rem:Quaternion-action}
      It is not necessarily that a closed connected manifold
      $W^n$ with a free $(\Z_2)^m$-action and
      $\mathrm{hrk}(W^n,\Z_2) =2^m$ must be
       a product of spheres. For example, Let $UT(S^{2n})$ be the unit
        tangent bundle of the $2n$-dimensional sphere $S^{2n}$ ($n\geq 1$).
        Then $UT(S^{2n})$ is a $(4n-1)$-dimensional closed manifold.
        If we think of $S^{2n}$ as the unit sphere centered at the origin in $\R^{2n+1}$, then
        the tangent space of $S^{2n}$ at any point can be thought of as a vector subspace
        of $\R^{2n+1}$. Under this viewpoint, we
         can represent any element in $UT(S^{2n})$ by $(x,v)$ where
        $x\in S^{2n}$ and $v\in T_x(S^{2n})$ is a unit tangent
        vector at $x$. We can define two free involutions $\sigma_1,\sigma_2$ on $UT(S^{2n})$ by:
        \nn
      \begin{itemize}
        \item $\sigma_1 (x,v) = (-x,-v), \ \forall\, (x,v)\in
        UT(S^{2n})$;\n

        \item $\sigma_2 (x,v) = (x,-v), \ \forall\, (x,v)\in   UT(S^{2n})$.
        \nn
      \end{itemize}

       Obviously, $\sigma_1\circ \sigma_2 = \sigma_2\circ\sigma_1$,
       so we get a free $(\Z_2)^2$-action on $UT(S^{2n})$. It is not
       hard to see that $H^i(UT(S^{2n}),\Z_2) \cong  H^i(S^{2n}\times S^{2n-1},\Z_2)$
       for all $i$. So we have $\mathrm{hrk}(UT(S^{2n}),\Z_2) = 4 =  2^2$.
        But $UT(S^{2n})$ is not homeomorphic to $S^{2n}\times
        S^{2n-1}$ since their $\Z_2$-cohomology
      ring structures are different and their rational homology groups are not isomorphic either.
         This example is informed to the author by M.~Masuda.
         By our Theorem~\ref{thm:Main-2},
      the orbit space $UT(S^{2n}) \slash (\Z_2)^2$ is not homeomorphic to any small
      cover. \vskip .2cm
   \end{rem}

   In this example, $UT(S^{2n})$ has the same
      $\Z_2$-cohomology groups as a product of spheres,
       so it is interesting to ask the following question.\vskip .2cm

 \textbf{Question:} does there exists a closed connected manifold
      $W^n$ with a free $(\Z_2)^m$-action so that (i) $\mathrm{hrk}(W^n,\Z_2) =2^m$
      and (ii) the $\Z_2$-cohomology groups of $W^n$ do not agree with
      the $\Z_2$-cohomology groups of any product of spheres?
    \\

             The paper is organized as follows.
          In section~\ref{Sec2}, we will review some basic definitions and results
          introduced in~\cite{Yu2010} and~\cite{Yu2010-2}. Then in section~\ref{Sec3},
          we will prove Theorem~\ref{thm:Main} and Theorem~\ref{thm:Main-2}.
          In particular, the~\textquotedblleft only if\,\textquotedblright~part of
          Theorem~\ref{thm:Main} uses an interesting result of
          Choi~\cite{Choi10} on the structure of simple polytopes.  \\

  \section{Some backgrounds and known results} \label{Sec2}

 Suppose $Q^n$ is an arbitrary $n$-dimensional closed connected smooth manifold. Let
  $k= \dim_{\Z_2} H_{n-1}(Q^n,\Z_2)$. It is well known that
   we can choose some embedded $(n-1)$-dimensional
  submanifolds $\Sigma_1, \cdots, \Sigma_k$ whose homology classes
  form a linear basis of $H_{n-1}(Q^n,\Z_2)$. If we cut $Q^n$ open
  along $\Sigma_1, \cdots, \Sigma_k$, i.e. we remove a small
  tubular neighborhood $N(\Sigma_i)$ of each $\Sigma_i$ and remove the interior
  of each $N(\Sigma_i)$ from $Q^n$, we will get a nice manifold
  with corners $V^n = Q^n - \bigcup_i int(N(\Sigma_i))$,
  which is called a \textit{$\Z_2$-core} of $Q^n$
  (see~\cite{Yu2010} for the details of the construction).
  the boundary of $V^n$ is the union of some compact subsets
         $P_1,\cdots, P_k$, called \textit{panels}, that satisfy the following three
         conditions:\vskip .2cm

           \begin{itemize}
             \item[(a)] each panel $P_i$ is a disjoint union of facets of $V^n$ and
               each facet is contained in exactly one panel; \vskip .1cm

             \item[(b)] there exists a free involution
                  $\tau_i$ on each $P_i$ which
                  sends a face $f \subset P_i$ to a face $f'\subset P_i$
                 (it is possible that $f'=f$); \vskip .1cm

             \item[(c)] for $\forall\, i \neq j$, $\tau_i (P_i \cap P_j) \subset P_i \cap
             P_j$ and $\tau_i\circ\tau_j = \tau_j\circ \tau_i
                 : P_i \cap P_j \rightarrow P_i \cap P_j$. \vskip .2cm
            \end{itemize}

  The $\{ \tau_i : P_i \rightarrow P_i \}_{1\leq i \leq k}$
  is called an \textit{involutive panel structure}
  on $V^n$ (see~\cite{Yu2010} for the details of the construction of $\tau_i$). \vskip .2cm

  It was shown in~\cite{Yu2010} that any principal
  $(\Z_2)^m$-bundle $M^n$ over $Q^n$ determines a map
  $\lambda: \{ P_1, \cdots, P_k \} \rightarrow (\Z_2)^m$ which is called a
  \textit{$(\Z_2)^m$-coloring} of $V^n$. And we can recover $M^n$ from $(V^n, \lambda)$
  in the following way called \textit{glue-back construction}.
       \begin{equation} \label{Glue_Back}
           M^n\cong M(V^n,\lambda) := V^n \times (\Z_2)^m \slash \sim
         \end{equation}
           Where $(x,g)\sim (x',g') $ whenever
            $x' = \tau_i(x)$ for some $P_i$ and
              $g' = g+ \lambda(P_i) \in (\Z_2)^m$.
        So if $x$ is in the relative interior of $P_{i_1} \cap \cdots \cap
             P_{i_s}$,
           $(x,g) \sim (x',g')$ if and only if
           $ (x',g')= ( \tau^{\varepsilon_s}_{i_s}\circ \cdots \circ
           \tau^{\varepsilon_1}_{i_1}(x), g + \varepsilon_1\lambda(P_1) + \cdots +
           \varepsilon_s\lambda(P_s))$
           where $\varepsilon_j= 0$ or $1$ for any $1\leq  j \leq s$ and
           $\tau^{0}_{i_j} := id$. \vskip .2cm

    Let $\theta_{\lambda} : V^n \times (\Z_2)^m \rightarrow
    M(V^n,\lambda)$ be the quotient map.
  There is a natural free $(\Z_2)^m$-action on $M(V^n,\lambda)$ defined by:
      \begin{equation} \label{Equ:FreeAction}
              g'\cdot \theta_{\lambda}(x,g) := \theta_{\lambda}(x, g'+g),\ \; \forall\, x\in V^n, \
             \forall\, g, g'\in (\Z_2)^m.
           \end{equation}
  And the homeomorphism between $M^n$ and $M(V^n,\lambda)$ is
  equivariant with respect to the free $(\Z_2)^m$-action.
   So we can represent any
 principal $(\Z_2)^m$-bundle over $Q^n$ by $M(V^n,\lambda)$ for some
 $(\Z_2)^m$-coloring $\lambda$ of $V^n$. Let
      \begin{align*}
      \quad  \mathrm{Col}_m(V^n) & := \text{the set of all $(\Z_2)^m$-colorings
       of $V^n$} \\
         L_{\lambda} &:= \text{the subgroup of $(\Z_2)^m$ generated
         by $ \{ \lambda(P_1), \cdots ,\lambda(P_k) \} $},\\
          \text{rank}(\lambda)  & :=  \mathrm{dim}_{\Z_2}
          L_{\lambda}.
       \end{align*}

        Obviously, $\mathrm{rank}(\lambda) \leq  k=\dim_{\Z_2}H_{n-1}(Q^n, \Z_2)$.
    If $\mathrm{rank}(\lambda) = k$, we call $\lambda$
     \textit{maximally independent} (in this case, we must have
     $m\geq k$).
 \vskip .2cm

  \begin{lem} [Theorem 2.3 in~\cite{Yu2010}] \label{thm:comp}
        For any $(\Z_2)^m$-coloring $\lambda$ of $V^n$,
       $M(V^n,\lambda)$ has $2^{m-\mathrm{rank}(\lambda)}$ connected
       components which are pairwise homeomorphic, and each
       connected component of $M(V^n,\lambda)$ is a principal
       $(\Z_2)^{\mathrm{rank}(\lambda)}$ bundle over $Q^n$.
  \end{lem}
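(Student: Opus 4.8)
The plan is to analyze the glue-back construction directly and track how the subgroup $L_\lambda$ governs the connectivity and covering structure of $M(V^n,\lambda)$.

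First I would fix a $(\Z_2)^m$-coloring $\lambda$ and consider the equivalence relation defining $M(V^n,\lambda)$. From the explicit description, if $(x,g)\sim(x',g')$ with $x$ in the relative interior of $P_{i_1}\cap\cdots\cap P_{i_s}$, then $g-g' = \sum_j \varepsilon_j\lambda(P_{i_j}) \in L_\lambda$. Hence the function sending the class $\theta_\lambda(x,g)$ to the coset $g + L_\lambda \in (\Z_2)^m/L_\lambda$ is well defined, and it yields a continuous surjection $M(V^n,\lambda)\to (\Z_2)^m/L_\lambda$. Each fiber of this map is exactly $\theta_\lambda(V^n\times (g+L_\lambda))$ for a fixed coset, and since $V^n$ is connected, each such fiber is connected (the gluing identifications within a single coset of $L_\lambda$ connect all the sheets $V^n\times\{g\}$ for $g$ in that coset). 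Thus $M(V^n,\lambda)$ has precisely $|{(\Z_2)^m/L_\lambda}| = 2^{m-\mathrm{rank}(\lambda)}$ connected components.

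Next I would identify each component. Fix the coset $L_\lambda$ itself; the corresponding component is $M_0 := \theta_\lambda(V^n\times L_\lambda)$. The free $(\Z_2)^m$-action of~\eqref{Equ:FreeAction} permutes the components, and the subgroup $L_\lambda \leq (\Z_2)^m$ is precisely the stabilizer of $M_0$ (an element $g'$ fixes the component iff $g' + L_\lambda = L_\lambda$), so $L_\lambda$ acts freely on $M_0$. I claim $M_0 = M(V^n,\lambda_0)$ where $\lambda_0$ is the same coloring but with codomain regarded as $L_\lambda \cong (\Z_2)^{\mathrm{rank}(\lambda)}$; indeed the equivalence relation restricted to $V^n\times L_\lambda$ is literally the glue-back relation for $\lambda_0$. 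Therefore $M_0$ is a principal $(\Z_2)^{\mathrm{rank}(\lambda)}$-bundle over $Q^n = V^n\times(\Z_2)^{\mathrm{rank}(\lambda)}/\!\sim$ (using that the glue-back construction with a surjective-onto-its-image coloring recovers a principal bundle over $Q^n$, as recalled from~\cite{Yu2010}). Finally, since $(\Z_2)^m$ acts transitively on the set of components and each element of $(\Z_2)^m$ gives a homeomorphism of $M(V^n,\lambda)$, any two components are homeomorphic via translation by a suitable coset representative.

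The step I expect to require the most care is verifying that each fiber $\theta_\lambda(V^n\times(g+L_\lambda))$ is genuinely connected — that is, that the quotient topology does not leave the sheets $V^n\times\{g'\}$, $g'\in g+L_\lambda$, in separate pieces. This reduces to showing that for each generator $\lambda(P_i)$ there is a point of $P_i$ where the gluing actually occurs (so the sheet $V^n\times\{g\}$ gets glued to $V^n\times\{g+\lambda(P_i)\}$), which holds because the panels $P_i$ are nonempty; combined with connectedness of $V^n$ this propagates to connect every pair of sheets within a coset. The remaining verifications — well-definedness of the coset map, the identification $M_0 \cong M(V^n,\lambda_0)$, and the homeomorphisms between components — are formal consequences of unwinding the definition in~\eqref{Glue_Back} and~\eqref{Equ:FreeAction}.
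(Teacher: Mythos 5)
The statement is cited in this paper as Theorem 2.3 of \cite{Yu2010} and is not reproved here, so there is no in-paper proof to compare your argument against. On its own merits, your argument is sound and is essentially the natural proof one would give. The coset map $\theta_\lambda(x,g)\mapsto g+L_\lambda$ is well defined because the glue-back identifications only change $g$ by elements of $L_\lambda$; it is locally constant because $\theta_\lambda^{-1}\bigl(\theta_\lambda(V^n\times(g+L_\lambda))\bigr)=V^n\times(g+L_\lambda)$ is open and closed in $V^n\times(\Z_2)^m$; each fiber is connected since the panels are nonempty and the images of $\lambda$ on the panels generate $L_\lambda$, so successive applications of the $\tau_i$-identifications link all sheets in a coset; transitivity of $(\Z_2)^m$ on the cosets, acting by homeomorphisms, gives the pairwise homeomorphism of components; and the component over $L_\lambda$ is literally the glue-back space for the coloring with image regarded in $L_\lambda\cong(\Z_2)^{\mathrm{rank}(\lambda)}$, hence a principal $(\Z_2)^{\mathrm{rank}(\lambda)}$-bundle over $Q^n$.

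The one hypothesis you use implicitly and should make explicit is that $V^n$ is itself connected. This is true, but it is a fact from the $\Z_2$-core construction in \cite{Yu2010}: the submanifolds $\Sigma_1,\ldots,\Sigma_k$ are chosen so that their classes form a $\Z_2$-basis of $H_{n-1}(Q^n,\Z_2)$, hence (dually) no nontrivial $\Z_2$-combination of them separates $Q^n$, so removing tubular neighborhoods of all of them leaves a connected manifold with corners. Without this, the connectivity of each fiber $\theta_\lambda(V^n\times(g+L_\lambda))$ would not follow, since you are propagating connectedness through the sheets $V^n\times\{g'\}$ via points on the panels. With that one remark added, your proposal is complete.
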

   \vskip .2cm

 \begin{lem}[Lemma 2.8 in~\cite{Yu2010-2}] \label{Lem:Max_Indep}
        Suppose $\lambda_{max} \in \mathrm{Col}_k(V^n) $ is a
         maximally independent $(\Z_2)^k$-coloring on $V^n$,  where
         $k=\dim_{\Z_2}H_{n-1}(Q^n, \Z_2)$.
         Then for any $\lambda \in \mathrm{Col}_k(V^n)$,
          $ \mathrm{hrk} (M(V^n,\lambda),\Z_2) \geq
           \mathrm{hrk} (M(V^n,\lambda_{max}),\Z_2)$.
      \end{lem}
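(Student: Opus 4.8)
The plan is to express both sides in terms of connected components using Lemma~\ref{thm:comp}, to exhibit $M(V^n,\lambda_{max})$ as the total space of a principal $(\Z_2)^{s}$-bundle over a component of $M(V^n,\lambda)$, and then to prove the general fact that passing to a principal $(\Z_2)^{s}$-bundle multiplies the total $\Z_2$-cohomology rank by at most $2^{s}$.

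Write $r=\mathrm{rank}(\lambda)$ and $s=k-r$. By Lemma~\ref{thm:comp}, $M(V^n,\lambda)$ is a disjoint union of $2^{s}$ pairwise homeomorphic connected closed manifolds, each of which I denote by $N_\lambda$, so $\mathrm{hrk}(M(V^n,\lambda),\Z_2)=2^{s}\,\mathrm{hrk}(N_\lambda,\Z_2)$; and $M(V^n,\lambda_{max})$ is itself connected, since $\mathrm{rank}(\lambda_{max})=k$. The inequality to be proved is therefore $2^{s}\,\mathrm{hrk}(N_\lambda,\Z_2)\ge\mathrm{hrk}(M(V^n,\lambda_{max}),\Z_2)$. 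To bring a covering map into play I first claim that $M(V^n,\lambda_{max})$ is a principal $(\Z_2)^{s}$-bundle over $N_\lambda$. Replacing $\lambda_{max}$ by $A\circ\lambda_{max}$ for a suitable $A\in GL_k(\Z_2)$ changes $M(V^n,\lambda_{max})$ only by a homeomorphism, so I may assume $\lambda_{max}(P_i)=e_i$, the $i$-th standard basis vector of $(\Z_2)^k$. Let $T\colon(\Z_2)^k\to(\Z_2)^k$ be the linear map with $T(e_i)=\lambda(P_i)$, so that $\dim\mathrm{im}(T)=r$ and $\dim\ker(T)=s$. Checking directly against the gluing relation in~\eqref{Glue_Back}, the assignment $(x,g)\mapsto(x,T(g))$ descends to a continuous map $M(V^n,\lambda_{max})\to M(V^n,\lambda)$ whose image is the component $N_\lambda$ indexed by the coset $L_\lambda\subset(\Z_2)^k$, and this map is exactly the quotient by the restriction to $\ker(T)\cong(\Z_2)^{s}$ of the free $(\Z_2)^k$-action~\eqref{Equ:FreeAction} on $M(V^n,\lambda_{max})$; hence it is a principal $(\Z_2)^{s}$-bundle. (Equivalently one argues via the classifying homomorphisms $\pi_1(Q^n)\to(\Z_2)^k$ of the two bundles, the one for $\lambda$ factoring through the one for $\lambda_{max}$ because the latter is surjective onto $H_1(Q^n,\Z_2)$.)

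It then remains to prove the following: if $\pi\colon E\to B$ is a principal $(\Z_2)^{s}$-bundle of closed manifolds, then $\mathrm{hrk}(E,\Z_2)\le 2^{s}\,\mathrm{hrk}(B,\Z_2)$. Choosing a complete flag of subgroups $0=H_0\subset H_1\subset\cdots\subset H_{s}=(\Z_2)^{s}$ factors $\pi$ as a tower of $s$ double covers $E=E/H_0\to E/H_1\to\cdots\to E/H_{s}=B$, so by induction it suffices to handle $s=1$. A connected double cover $E\to B$ is the unit sphere bundle of a real line bundle $L$ over $B$ with $w_1(L)=w\in H^1(B,\Z_2)$, and the Gysin sequence of this $S^0$-bundle breaks into short exact sequences
\[ 0\to\mathrm{coker}\bigl(\cup w\colon H^{i-1}(B,\Z_2)\to H^{i}(B,\Z_2)\bigr)\to H^{i}(E,\Z_2)\to\ker\bigl(\cup w\colon H^{i}(B,\Z_2)\to H^{i+1}(B,\Z_2)\bigr)\to 0 \]
for every $i$. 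Taking $\dim_{\Z_2}$ and summing over $i$, the contribution $\sum_i\bigl(\dim H^{i}(B,\Z_2)-\dim H^{i-1}(B,\Z_2)\bigr)$ telescopes to $0$, and what is left is $\mathrm{hrk}(E,\Z_2)=2\sum_i\dim\ker\bigl(\cup w\colon H^{i}(B,\Z_2)\to H^{i+1}(B,\Z_2)\bigr)\le 2\,\mathrm{hrk}(B,\Z_2)$. Feeding this back with $E=M(V^n,\lambda_{max})$, $B=N_\lambda$ and $s=k-r$ yields exactly $2^{s}\,\mathrm{hrk}(N_\lambda,\Z_2)\ge\mathrm{hrk}(M(V^n,\lambda_{max}),\Z_2)$, which is the assertion.

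The only part with real content is the last estimate, and even there the mechanism is just the Gysin sequence combined with the telescoping cancellation that forces $\mathrm{hrk}(E,\Z_2)$ to equal exactly twice the total dimension of the kernels of cup product with $w_1(L)$. The reduction is essentially bookkeeping; the step that needs care is the verification that $(x,g)\mapsto(x,T(g))$ is well defined and realises $N_\lambda$ as $M(V^n,\lambda_{max})/\ker(T)$ — i.e.\ that two points of $M(V^n,\lambda_{max})$ with the same image differ by a $\ker(T)$-translate — which requires tracking how compositions of the $\tau_i$ interact with $L_\lambda$.
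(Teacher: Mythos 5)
Your proof is correct and follows essentially the same route as the paper's (cited from~\cite{Yu2010-2} and echoed in the proof of Theorem~\ref{thm:Main-2} via Lemma~\ref{Lem:Double_Covering}): reduce to a tower of double covers and apply the Gysin-sequence estimate $\mathrm{hrk}(\widetilde N,\Z_2)\le 2\,\mathrm{hrk}(N,\Z_2)$ at each stage. The only organizational difference is that you build the single $(\Z_2)^{k-\mathrm{rank}(\lambda)}$-bundle $M(V^n,\lambda_{max})\to N_\lambda$ directly via the linear map $T$ and then factor it through a flag of subgroups, whereas the original argument constructs an explicit chain of intermediate colorings $\lambda_0,\lambda_1,\dots$ each raising the rank by one and uses uniqueness of the maximally independent case; the two bookkeeping schemes are interchangeable.
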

  \vskip .2cm

    Next, we review some basic facts on the small cover and real moment-angle manifold.
     Suppose $P^n$ is an $n$-dimensional simple polytope with $k+n$
     facets ($k\geq 1$). Here, simple means that each vertex of $P^n$ is incident to exactly
     $n$ facets of $P^n$. Let $F_1,\cdots, F_{k+n}$ be all the facets of
   $P^n$. For any $m\geq 1$, a map $\{ F_1,\cdots, F_{k+n} \} \rightarrow (\Z_2)^m$
   is called a $(\Z_2)^m$-\textit{coloring} of $P^n$. \vskip .2cm

     Suppose $Q^n$ is a small cover over $P^n$.
     Then $Q^n$ determines a $(\Z_2)^n$-coloring
     $\mu$ of $P^n$ that satisfies:
       whenever $F_{i_1}\cap \cdots \cap F_{i_s} \neq \varnothing$,
        $\mu(F_{i_1}), \cdots, \mu(F_{i_s})$ are
        linearly independent vectors in $(\Z_2)^{n}$.
        The $\mu$ is also called the \textit{characteristic function} of
        $Q^n$ (see~\cite{DaJan91}).  For any face
    $\mathbf{f}=F_{i_1}\cap \cdots \cap F_{i_l}$ of $P^n$, let
    $G^{\mu}_{\mathbf{f}}$ be the rank-$l$ subgroup of $(\Z_2)^n$ generated by
    $\mu(F_1),\cdots, \mu(F_l)$.
       Then we can recover $Q^n$ from $(P^n, \mu)$ by:
    \begin{equation} \label{Equ:Glue-SmallCov}
       \text{$Q^n= P^n\times (\Z_2)^n \slash
    \sim$,\ \ $(p,w)\sim (p',w') \Longleftrightarrow p=p', w-w'\in
    G^{\mu}_{\mathbf{f}(p)}$},
    \end{equation}
     where $\mathbf{f}(p)$ is the unique face of $P^n$ that contains
    $p$ in its relative interior. Let $\zeta_{\mu}: P^n\times (\Z_2)^n \rightarrow Q^n$
    be the corresponding quotient map. Then the locally standard $(\Z_2)^n$-action on
    $Q^n$ can be written as:
    \begin{equation} \label{Equ:Local-Std-Action}
      w' \cdot \zeta_{\mu} (p,w) = \zeta_{\mu}(p,w' + w),\ \, \forall\, p\in P^n,
    \ w,w'\in (\Z_2)^n
   \end{equation}

  Obviously, the orbit space of this action can be identified with $P^n$.
    It was shown in~\cite{DaJan91}
    that the $\Z_2$-Betti numbers of $Q^n$ are decided only by the
   $h$-vector of $P^n$. In particular, $H_{n-1}(Q^n,\Z_2) \cong
   (\Z_2)^k$. Moreover, any facet $F_i$ of $P^n$ is a simple polytope of dimension
   $n-1$, and $\zeta_{\mu} (F_i\times (\Z_2)^n)$
   is a small cover over $F_i$ whose characteristic function $\mu_{F_i}$ on $F_i$ is
   induced from $\mu$ by: $\mu_{F_i}(F_j\cap F_i) := \mu(F_j)$ for any face $F_j\cap F_i$
   of $F_i$. \vskip .2cm

       In addition, let $\{ e_1,\cdots , e_{k+n} \}$ be a basis of $(\Z_2)^{k+n}$ and
        define a $(\Z_2)^{k+n}$-coloring $\mu_0$ of $P^n$ by
     $\mu_0(F_i) := e_i,\ 1 \leq i \leq k+n$. Then the \textit{real moment-angle manifold}
   $\R\mathcal{Z}_{P^n}$ is obtained by gluing $2^{k+n}$ copies of $P^n$ together
   according to $\mu_0$ and the rule in~\eqref{Equ:Glue-SmallCov}.
   Let $\Theta: P^n \times (\Z_2)^{k+n} \rightarrow \R\mathcal{Z}_{P^n}$
   be the corresponding quotient map. There is a \textit{canonical}
   $(\Z_2)^{k+n}$-action on $\R\mathcal{Z}_{P^n}$ defined by:
      \begin{equation} \label{Equ:Cano-Action}
      g' \circledast \Theta (p,g) =  \Theta (p, g' + g),\ \,
      \forall \, p\in P^n,\ \forall\, g , g' \in  (\Z_2)^{k+n}.
   \end{equation}
    For the small cover $Q^n$, there exists a subtorus
    $H$ of $(\Z_2)^{k+n}$ with rank $k$ so that:
     \begin{itemize}
        \item[(i)]  $H$ acts freely on $\R\mathcal{Z}_{P^n}$ through the canonical
    action $\circledast$, and \vskip .1cm
        \item[(ii)] the orbit space
    $\R\mathcal{Z}_{P^n}\slash H$ is homeomorphic to $Q^n$.
      \end{itemize}
      \vskip .1cm

   But we remark that the subtorus $H \subset (\Z_2)^{k+n}$
   which satisfies (i) and (ii) is not unique (see~\cite{Yu2010-2}).
 \\

    \section{Proof of Theorem~\ref{thm:Main} and Theorem~\ref{thm:Main-2} } \label{Sec3}
     \vskip .2cm

  \textbf{Proof of Theorem~\ref{thm:Main}.}\
   First, suppose $Q^n$ is a small cover over a product of simplices
   $\Delta^{n_1}\times \cdots \times \Delta^{n_r}$ where $ n_1 + \cdots +
   n_r = n$. For the sake of brevity, we denote
   the simple polytope $\Delta^{n_1}\times \cdots \times \Delta^{n_r}$
   by $\Delta^I$ where $I= (n_1,\cdots, n_r)$.
    It is easy to see that the number of facets of
   $\Delta^I$ equals $r+n_1 + \cdots + n_r = r+n$,
   and $\R\mathcal{Z}_{\Delta^I} \cong S^{n_1}\times \cdots \times S^{n_r}$.
   By the discussion at the end of the previous section,
    there exists some subtorus $H \subset (\Z_2)^{r+n}$ with rank
    $r$ so that
    $H$ acts freely on $\R\mathcal{Z}_{\Delta^I}$ through the canonical
    action, and the orbit space
    $\R\mathcal{Z}_{\Delta^I}\slash H$ is homeomorphic to $Q^n$.
        In other words,
    $S^{n_1}\times \cdots \times S^{n_r}$ is a principal $(\Z_2)^r$-bundle over $Q^n$.
    Notice that $\mathrm{hrk}(S^{n_1}\times \cdots \times S^{n_r},\Z_2) = 2^r$, so we have
    proved
    the~\textquotedblleft if\,\textquotedblright~part of the Theorem~\ref{thm:Main}.
    \vskip .2cm

    Conversely, if $Q^n$ is a small cover over a simple polytope
    $P^n$.
     Let $F_1,\cdots, F_{k+n}$ be all the facets of $P^n$ and
     $\mu$ be the $(\Z_2)^n$-coloring (characteristic function) of $P^n$ corresponding to $Q^n$.
     Let $\pi_{\mu}: Q^n \rightarrow P^n$ be the
    orbit map of the locally standard $(\Z_2)^n$-action
    (see~\eqref{Equ:Local-Std-Action}).
    Now, assume that there exists a positive integer $m$ and a
     principal $(\Z_2)^m$-bundle $\xi: M^n \rightarrow Q^n$
    with $\mathrm{hrk}(M^n, \Z_2) = 2^m$. We want to show that $P^n$ must be
     a product of simplices.
     \vskip .2cm

    When $n=1$, this is obviously true.\vskip .2cm

    When $n=2$, notice that the Euler characteristics of $M^2$ and $Q^2$
    have the relation:
     $\chi(M^2) = 2^m \cdot \chi(Q^2)$. Without loss of generality,
     we can assume $M^2$ is connected (if $M^2$ is not connected, we just
     consider any one of its components). Then
     $\mathrm{hrk}(M^2,\Z_2) = 4-\chi(M^2)$.
     The assumption $\mathrm{hrk}(M^2, \Z_2) = 2^m$ implies that:
     $ 2^m (\chi(Q^2) + 1) =4$, which will force
       $\chi(Q^2) =0$ or $1$. Then $Q^2$ must be
     a torus, a Klein bottle or a real projective plane.
     The torus and Klein bottle are small covers over the square (product of
     $1$-simplices) and the real projective plane is the small cover over
     the $2$-simplex. So in any case, $P^2$ is a product of simplices.
      \vskip .2cm

   When $n \geq 3$, we claim the following.   \vskip .2cm

    \textbf{Claim:} any $2$-dimensional face of $P^n$ ($n\geq 3$) is either a triangle
    or a square. \vskip .2cm

    To prove this claim, we will use the glue-back construction
    to analyze the principal $(\Z_2)^m$-bundle $M^n$ as we did in the proof of Theorem~\ref{thm:Old-Result}
    in~\cite{Yu2010-2}.
    First, we can construct some
    special $\Z_2$-core of $Q^n$ in the following way.
    Take an arbitrary vertex $v_0$ of $P^n$ and
     assume that $F_{i_1},\cdots, F_{i_k}$ are those facets of $P^n$ which are not
     incident to $v_0$.
     Then according to~\cite{DaJan91}, $\dim_{\Z_2} H_{n-1}(Q^n,\Z_2) = k$ and
      the homology classes of the embedded submanifolds
      $\pi_{\mu}^{-1}(F_{i_1}),\cdots, \pi_{\mu}^{-1}(F_{i_k})$
      (called \textit{facial submanifolds} of $Q^n$)
       form a $\Z_2$-linear basis of $H_{n-1}(Q^n,\Z_2)$.
       Cutting $Q^n$ open along $\pi_{\mu}^{-1}(F_{i_1}),\cdots, \pi_{\mu}^{-1}(F_{i_k})$
       will give us a $\Z_2$-core $V^n$ of $Q^n$.
      \vskip .2cm

      Then our principal $(\Z_2)^m$-bundle $M^n$ over $Q^n$ is
     (equivariantly) homeomorphic to $M(V^n,\lambda)$ for some
      $(\Z_2)^m$-coloring $\lambda$ on $V^n$. So
      $\mathrm{hrk}(M(V^n,\lambda),\Z_2) = 2^m$.\vskip .2cm

     \begin{enumerate}
       \item[Case 1:] If $m \leq k $, let $\iota : (\Z_2)^m \hookrightarrow (\Z_2)^k$ be the standard
      inclusion and define $\widehat{\lambda}:= \iota \circ
      \lambda$. We consider $\widehat{\lambda}$ as a $(\Z_2)^k$-coloring on $V^n$.
        The Theorem~\ref{thm:comp} implies that $M(V^n,\widehat{\lambda})$
       consists of $2^{k-m}$ copies of $M(V^n,\lambda)$, so
       $\mathrm{hrk}(M(V^n,\widehat{\lambda}),\Z_2) = 2^k$.\vskip .2cm

      \item[Case 2:] If $m>k$, since $\mathrm{rank}(\lambda) \leq k$,
      with a proper change of basis, we can assume $L_{\lambda} \subset (\Z_2)^k \subset
       (\Z_2)^m$. Let $\varrho: (\Z_2)^m \rightarrow (\Z_2)^k$ be
       the standard projection. Define $\overline{\lambda}:= \varrho\circ \lambda$.
       Similarly, we consider $\overline{\lambda}$ as a $(\Z_2)^k$-coloring on
     $V^n$ and so by Theorem~\ref{thm:comp}, $M(V^n,\lambda)$
       consists of $2^{m-k}$ copies of $M(V^n,\overline{\lambda})$, so
       $\mathrm{hrk}(M(V^n,\lambda),\Z_2) = 2^k$.
       \end{enumerate}

       So in whatever case,
       there always exists an element $\lambda^* \in \mathrm{Col}_k(V^n)$
       so that $\mathrm{hrk}(M(V^n,\lambda^*), \Z_2) = 2^k$.
       Moreover, by Theorem~\ref{thm:Old-Result} and Lemma~\ref{Lem:Max_Indep},
      we can assume that $\lambda^*$ is maximally independent, i.e.
      $\mathrm{rank}(\lambda^*) = k$. \vskip .2cm

       Let
      $\xi_{\lambda^*} : M(V^n,\lambda^*) \rightarrow Q^n$ be the orbit map of
       the natural $(\Z_2)^k$-action on $M(V^n,\lambda^*)$ defined by~\eqref{Equ:FreeAction}.
    In the proof of Theorem~\ref{thm:Old-Result} in~\cite{Yu2010-2},
    it was shown that
    for any facet $F_i$ of $P^n$,
    $\mathrm{hrk}(M(V^n,\lambda^*), \Z_2) \geq \mathrm{hrk}(\xi^{-1}_{\lambda^*}
    (\pi^{-1}_{\mu}(F_i)),\Z_2)$.
    Notice that: \vskip .2cm

    \begin{itemize}
      \item  $\pi^{-1}_{\mu}(F_i)$ is a small cover over $F_i$;
      \vskip .1cm

      \item  $\xi^{-1}_{\lambda^*}(\pi^{-1}_{\mu}(F_i))$ is a principal
    $(\Z_2)^k$-bundle over $\pi^{-1}_{\mu}(F_i)$.
    \vskip .2cm
    \end{itemize}

    So by Theorem~\ref{thm:Old-Result}, we have
     $\mathrm{hrk}(\xi^{-1}_{\lambda^*}(\pi^{-1}_{\mu}(F_i)),\Z_2) \geq
    2^k$. But by our construction, $\mathrm{hrk}(M(V^n,\lambda^*), \Z_2) = 2^k$, so we
    must have
    $\mathrm{hrk}(\xi^{-1}_{\lambda^*}(\pi^{-1}_{\mu}(F_i)),\Z_2) =
    2^k$. Let $Y_i = \xi^{-1}_{\lambda^*}(\pi^{-1}_{\mu}(F_i))$. So $Y_i$ is a
    principal $(\Z_2)^k$-bundle over the small cover
    $\pi^{-1}_{\mu}(F_i)$ with $\mathrm{hrk}(Y_i,\Z_2) = 2^k$ (see the following diagram).\vskip .2cm

   \[ \xymatrix{
    Y_i:= \xi^{-1}_{\lambda^*}(\pi^{-1}_{\mu}(F_i)) \ar[d]^{\xi_{\lambda^*}} \ar[r]^{\quad\ \ \subset} &  M(V^n,\lambda^*) \ar[d]^{\xi_{\lambda^*}} \\
     \pi^{-1}_{\mu}(F_i)\, \ar[d]^{\pi_{\mu}} \ar[r]^{\;\ \; \subset} & \  Q^n \ar[d]^{\pi_{\mu}} \\
     F_i \ar[r]^{\subset} & P^n   }
     \]
      \vskip .3cm

    By applying the above argument to the principal $(\Z_2)^k$-bundle
      $Y_i$ over the small cover $\pi^{-1}_{\mu}(F_i)$,
       we can show that for any codimension two
     face $F_i\cap F_j$ of $P^n$, there exists some positive integer $k'$ and
     some principal $(\Z_2)^{k'}$-bundle $Y_{ij}$ over the small cover
      $\pi^{-1}_{\mu}(F_i\cap F_j)$
      with $\mathrm{hrk}(Y_{ij},\Z_2) = 2^{k'}$. \vskip .2cm

       Then by iterating this argument,
      we can show that
      for any $2$-dimensional face $\mathbf{f}$ of $P^n$,
      there exists some positive integer $k_{\mathbf{f}}$ and
      some principal $(\Z_2)^{k_{\mathbf{f}}}$-bundle $Y_{\mathbf{f}}$ over
      the small cover $\pi^{-1}_{\mu}(\mathbf{f})$
      with $\mathrm{hrk}(Y_{\mathbf{f}},\Z_2) = 2^{k_\mathbf{f}}$.
      Then our discussion on dimension two cases suggests that
       $\mathbf{f}$ must be a square or a triangle. So the claim is proved.
  \vskip .2cm

   Then our theorem follows from the Theorem~\ref{thm:Polytope}
   below, which is an unpublished result of Suyoung Choi~\cite{Choi10}.
    \hfill $\square$\\

  \begin{thm}[Choi~\cite{Choi10}] \label{thm:Polytope}
   For an $n$-dimensional simple polytope $P^n$ with $n\geq 3$,
  then $P^n$ is a product of simplices if and only if
   any $2$-dimensional face of $P^n$ is either a triangle or a
   square.
  \end{thm}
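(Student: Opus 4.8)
The plan is to prove the nontrivial direction: if every $2$-dimensional face of an $n$-dimensional simple polytope $P^n$ (with $n \geq 3$) is a triangle or a square, then $P^n$ is a product of simplices. I would argue by induction on $n$. The base case $n=3$ must be handled directly: a simple $3$-polytope whose $2$-faces are all triangles or squares should be shown to be a tetrahedron, a triangular prism, or a cube (equivalently $\Delta^3$, $\Delta^2 \times \Delta^1$, or $\Delta^1 \times \Delta^1 \times \Delta^1$). This can be done via Steinitz's theorem together with a combinatorial analysis of the (planar, $3$-connected) graph of $P^3$: count edges around each vertex figure (a triangle, since $P^3$ is simple), use Euler's formula $V - E + F = 2$ with $2E = 3V$ and $2E = \sum_{\text{faces}} (\text{size}) = 3 f_3 + 4 f_4$ where $f_3 + f_4 = F$, and then a short case analysis on how triangular and quadrilateral faces can fit around a vertex forces one of the three polytopes.

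For the inductive step, fix a vertex $v_0$ of $P^n$ and let $F_1, \dots, F_n$ be the facets through $v_0$. Each facet $F_i$ is itself a simple $(n-1)$-polytope all of whose $2$-faces are triangles or squares, so by induction each $F_i$ is a product of simplices. The key structural step is to use the facets through a single vertex to reconstruct the product decomposition of $P^n$ globally. I would look at the link of $v_0$: since $P^n$ is simple, the $n$ facets $F_1, \dots, F_n$ meet pairwise in codimension-$2$ faces through $v_0$, and these in turn are products of simplices by induction. The combinatorial type of $P^n$ near $v_0$, together with the constraint that $2$-faces are only triangles or squares, should propagate: a $2$-face that is a triangle forces a local $\Delta^2$ factor, a $2$-face that is a square forces a local $\Delta^1 \times \Delta^1$ configuration, and one assembles these into a global product $\Delta^{n_1} \times \cdots \times \Delta^{n_r}$. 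Concretely, one can try to show that the facet hypergraph of $P^n$ (which facets are "parallel" or "belong to the same simplex factor") is determined by which pairs of facets fail to intersect and which triples span triangular $2$-faces; the all-triangle-or-square hypothesis rigidifies this enough that a theorem identifying $P^n$ from its facet incidence data (e.g.\ that a simple polytope whose dual is a join of boundaries of simplices is a product of simplices) applies.

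The main obstacle I anticipate is the inductive gluing step: knowing that every facet is a product of simplices does not immediately give a product structure on $P^n$, since the decompositions of adjacent facets must be shown to be compatible and to extend across $v_0$. The all-triangle-or-square condition on $2$-faces is exactly the local rigidity that should make this work, but turning "each $2$-face is small" into "the global polytope is a product" requires a careful argument — likely by showing that the "is in the same simplex block as" relation on the facet set is an equivalence relation (reflexivity and symmetry are clear; transitivity is where the $2$-face hypothesis is used, since a chain of facets sharing triangular $2$-faces must close up into a single simplex block rather than a longer cycle, which a pentagonal or hexagonal $2$-face would permit). Once that equivalence relation is established, partitioning the facets into blocks and checking that $P^n$ is the product of the simplices dual to the blocks is routine. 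I would also double-check the degenerate cases $r=1$ (so $P^n = \Delta^n$, where there are no $2$-faces that are squares) and small $n$ to make sure the induction is properly seeded.
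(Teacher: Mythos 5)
The paper does not prove Theorem~\ref{thm:Polytope}: it is quoted as an unpublished result of Choi obtained through a private communication (reference~\cite{Choi10}), so there is no proof in the paper against which to compare your argument, and your proposal must stand on its own.

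Your overall plan (induction on $n$; base case $n=3$ via Euler's formula; inductive step partitioning the facet set into ``simplex blocks'') is a sensible outline, but the key step is exactly the one you flag and then leave open. The base case is essentially sound: $2E=3V$, $2E=3f_3+4f_4$, and $V-E+F=2$ force $(f_3,f_4)\in\{(4,0),(2,3),(0,6)\}$, and a short (omitted) check of combinatorial uniqueness identifies the tetrahedron, the triangular prism, and the cube. The inductive step is where the theorem lives. You propose to define ``$F$ and $G$ lie in the same simplex factor'' from non-intersection data together with which facets appear as edge-facets of a common triangular $2$-face, but you never write the relation down precisely, and the two pieces of data behave quite differently across blocks: inside a $\Delta^{n_i}$-factor with $n_i\geq 2$ all facets pairwise intersect (so disjointness alone detects nothing), while inside a $\Delta^1$-factor the two facets are disjoint and can never be edge-facets of a triangle, so the triangle criterion alone detects nothing there. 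Proving that the combined relation is transitive is precisely where the hypothesis ``every $2$-face is a triangle or a square'' must do its work, and saying the product structure ``should propagate'' is not an argument. Moreover, even granting an equivalence relation on the facets, one must still show that the induced partition actually realises $P^n$ as a product of the corresponding simplices --- e.g.\ by identifying the dual simplicial sphere with a join of boundaries of simplices --- and that step is also absent. So the proposal is a plausible sketch, but the two central ingredients (a precise block relation with a proof of transitivity, and the passage from the partition to a genuine product decomposition) are missing, and filling them in is the whole content of the theorem.
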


   \vskip .2cm

  \textbf{Proof of Theorem~\ref{thm:Main-2}.}
     By Theorem~\ref{thm:Main}, if $M^n$ is a principal $(\Z_2)^m$-bundle over a
     small cover $Q^n$ with $\mathrm{hrk}(M^n,\Z_2)=2^m$, $Q^n$ must
     be a small cover over a product of simplices
     $\Delta^I= \Delta^{n_1}\times \cdots \times \Delta^{n_r}$ where
     $n_1+ \cdots + n_r = n$. Let $\{ v^i_0,\cdots, v^i_{n_i} \}$ be
     the set of vertices of $\Delta^{n_i}$. Then each vertex of
     $\Delta^I$ can be written as a product of vertices of
     $\Delta^{n_i}$'s for $i=1,\cdots, r$. Hence the set of vertices
     of $\Delta^I$ is:
      \[  \{   v_{j_1\ldots j_r}= v^1_{j_1}\times \cdots \times v^r_{j_r} \,
      | \ 0\leq j_i \leq n_i,\ i=1,\cdots, r \}. \]
    Each facet of $\Delta^I$ is the product of a codimension-one
    face of $\Delta^{n_i}$'s and the remaining simplices. So the set
    of facets of $\Delta^I$ is:
      \[  \mathcal{F}(\Delta^I) = \{ F^i_{k_i} \, | \ 0 \leq k_i \leq n_i,
      \ i=1,\cdots, r  \},  \]
   where $F^i_{k_i} = \Delta^{n_1}\times \cdots \times \Delta^{n_{i-1}} \times f^{i}_{k_i}
   \times \Delta^{n_{i+1}} \times \cdots \times \Delta^{n_r}$, and
   $f^{i}_{k_i}$ is the codimension-one face of the simplex $\Delta^{n_i}$
   which is opposite to the vertex $v^i_{k_i}$. So there are
   $r+n$ facets in $\Delta^I$.  Since $\Delta^I$ is simple,
   exactly $n$ facets meet at each vertex. Indeed, the vertex
   $v_{j_1\ldots j_r}$ of $\Delta^I$ is the intersection of
   all the $n$ facets in
   $$\mathcal{F}(\Delta^I) - \{ F^i_{j_i}\, | \ i=1,\cdots, r
   \}.$$
    In particular, the $n$ facets that
    intersect at the vertex $v_{0\ldots 0}$ are:
    $$\mathcal{F}(\Delta^I) - \{ F^i_0 \, | \ i=1,\cdots, r
   \} = \{ F^1_1, \cdots, F^1_{n_1},\cdots, F^r_1, \cdots, F^r_{n_r} \} $$
   And the facets not incident to $v_{0\ldots 0}$ are
   $ F^1_0,\cdots, F^r_0 $. Note that for any $1\leq i \neq  i' \leq r$,
   the intersection of $F^i_0$ and  $F^{i'}_0 $ is exactly a codimension two
   face of $\Delta^I$.\vskip .2cm

     Suppose $\mu$ is the characteristic function of $Q^n$
     on $\Delta^I$ and $\pi_{\mu} : Q^n \rightarrow \Delta^I$
     is the corresponding quotient map the locally standard action on $Q^n$.
      Then according to the
     preceding discussion, we can cut $Q^n$ along the
     facial submanifolds $\pi^{-1}_{\mu}(F^1_0),\cdots,
     \pi^{-1}_{\mu}(F^r_0)$ which will gives us a $\Z_2$-core
     $V^n$ of $Q^n$. The panels of $V^n$ are denoted by
     $P_1,\cdots, P_r$ where $P_i$ consists of $2^n$ copies of
     $F^{i}_{0}$. \vskip .2cm

     Since we assume $M^n$ is connected, by Lemma~\ref{thm:comp},
     there exists a
     $\lambda\in \mathrm{Col}_{m}(V^n)$ such that $M^n \cong M(V^n,\lambda)$
     and $m=\mathrm{rank}(\lambda)\leq r = \dim_{\Z_2} H_{n-1}(Q^n,\Z_2)$.
     In addition,
     $\R\mathcal{Z}_{\Delta^I} = S^{n_1}\times \cdots \times S^{n_r}$
     is a principal $(\Z_2)^r$-bundle over $Q^n$.\vskip .2cm

    Let $\iota: (\Z_2)^m \rightarrow (\Z_2)^r$ be the standard inclusion and
    define $\lambda_0 = \iota \circ \lambda$. So $\lambda_0$ is
     a $(\Z_2)^r$-coloring on $V^n$. Obviously, $\mathrm{rank}(\lambda_0)=
     \mathrm{rank}(\lambda)$. Without loss of generality,
     we assume
    $\{ \lambda_0(P_1),\cdots, \lambda_0(P_m) \}$ is a basis of
    $L_{\lambda_0} \subset (\Z_2)^r$.
    Choose $\omega_1, \cdots, \omega_{r-m} \in (\Z_2)^r$
        so that $\{ \lambda_0(P_1),\cdots, \lambda_0(P_m),  \omega_1, \cdots, \omega_{r-m}  \}$
        forms a basis of $(\Z_2)^r$.
    Then we define a sequence of coloring
    $ \lambda_1,\cdots, \lambda_{r-m} \in \mathrm{Col}_r(V^n)$ as
    following: for each $1\leq j \leq r-m$,
     \[  \lambda_j(P_i) :=  \left\{
          \begin{array}{ll}
             \lambda_0(P_i),  & \hbox{$1\leq i \leq m$ or $m+j<i\leq r$;} \\
             \omega_{i-m}, & \hbox{$m+1\leq i \leq m+j$.}
          \end{array}
        \right.
       \]

   Then $\mathrm{rank}(\lambda_{j+1}) = \mathrm{rank}(\lambda_j) +1$
     for any $0 \leq j < r-m$. Let $\theta_{j} : V^n\times (\Z_2)^k \rightarrow M(V^n,\lambda_j)$
     be the quotient map of the glue-back construction.
     Then by the proof of Lemma~\ref{Lem:Max_Indep} in~\cite{Yu2010-2},
     there exists a sequence of closed
    connected manifolds:
      \begin{equation} \label{Equ:Db-Cov-Seq}
        K_{r-m} \overset{\eta_{r-m}}{\longrightarrow}
        K_{r-m-1} \overset{\eta_{r-m-1}}{\longrightarrow} \cdots
      \overset{\eta_{2}}{\longrightarrow}
        K_1 \overset{\eta_{1}}{\longrightarrow}  K_0 = M^n,
        \end{equation}
      where each $K_j = \theta_j (V^n\times L_{\lambda_j})$ is a connected component of $M(V^n,\lambda_j)$
      and the $\eta_j : K_j \rightarrow K_{j-1}$ is a double
      covering. Notice that $\mathrm{rank}(\lambda_{r-m}) = r$, so
      $M(V^n,\lambda_{r-m}) = K_{r-m}$ and $\lambda_{r-m}$ is a maximally independent
      $(\Z_2)^r$-coloring on $V^n$.
      Then both $K_{r-m}$
        and $ \R\mathcal{Z}_{\Delta^I}$ are
       connected principal $(\Z_2)^r$-bundles over $Q^n$.
       Then the Lemma 2.5 in~\cite{Yu2010-2} asserts
        that $K_{r-m}$ must be homeomorphic to $\R\mathcal{Z}_{\Delta^I}$.\vskip .2cm

        To analyze the relationship between the
      total $\Z_2$-cohomology rank of these spaces, we need the following lemma.\vskip .2cm

        \begin{lem}\label{Lem:Double_Covering}
          For a closed connected manifold $N$ and any double covering $\xi: \widetilde{N} \rightarrow N$,
         we must have $ \mathrm{hrk}(\widetilde{N},\Z_2) \leq 2 \cdot
          \mathrm{hrk}(N,\Z_2)$. And $ \mathrm{hrk}(\widetilde{N},\Z_2) =  2 \cdot
          \mathrm{hrk}(N,\Z_2)$ if and only if $\xi$ is a trivial double covering.
       \end{lem}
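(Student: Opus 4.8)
The plan is to reduce everything to one exact sequence. Double coverings of the connected space $N$ are classified by $H^1(N;\Z_2)\cong\mathrm{Hom}(\pi_1(N),\Z_2)$; let $\alpha\in H^1(N;\Z_2)$ be the class classifying $\xi$. The covering $\xi$ is trivial, i.e. $\widetilde N\cong N\sqcup N$, exactly when $\alpha=0$; when $\alpha\neq 0$ the kernel of $\alpha$ is an index-two subgroup of $\pi_1(N)$ and $\widetilde N$ is connected. So it suffices to prove $\mathrm{hrk}(\widetilde N,\Z_2)\leq 2\,\mathrm{hrk}(N,\Z_2)$ with equality if and only if $\alpha=0$.

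First I would invoke the Gysin sequence of the real line bundle $L\to N$ with $w_1(L)=\alpha$, whose $0$-sphere bundle $S(L)$ is precisely the double cover $\widetilde N$. Combining the Thom isomorphism with the long exact sequence of the pair $(D(L),S(L))$ and identifying the mod-$2$ Euler class of $L$ with $\alpha$ produces the exact sequence
$$\cdots\longrightarrow H^{i-1}(N;\Z_2)\xrightarrow{\ \cup\,\alpha\ }H^{i}(N;\Z_2)\xrightarrow{\ \xi^{*}\ }H^{i}(\widetilde N;\Z_2)\xrightarrow{\ \partial\ }H^{i}(N;\Z_2)\xrightarrow{\ \cup\,\alpha\ }H^{i+1}(N;\Z_2)\longrightarrow\cdots$$
(the same sequence also falls out of the Cartan--Leray spectral sequence of the $\Z_2$-covering, which collapses over $\Z_2$). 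Since $N$ and $\widetilde N$ are closed manifolds, every group here is finite-dimensional over $\Z_2$, so both total ranks are finite.

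Now the bookkeeping. Put $a_i=\dim_{\Z_2}H^i(N;\Z_2)$ and let $\rho_i$ be the rank of the $\Z_2$-linear map $\cup\,\alpha:H^i(N;\Z_2)\to H^{i+1}(N;\Z_2)$. Exactness of the displayed sequence gives, for each $i$, a short exact sequence
$$0\longrightarrow\mathrm{coker}\bigl(\cup\,\alpha:H^{i-1}\to H^{i}\bigr)\longrightarrow H^{i}(\widetilde N;\Z_2)\longrightarrow\ker\bigl(\cup\,\alpha:H^{i}\to H^{i+1}\bigr)\longrightarrow 0,$$
so $\dim_{\Z_2}H^{i}(\widetilde N;\Z_2)=(a_i-\rho_{i-1})+(a_i-\rho_i)$. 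Summing over $i$ and telescoping yields
$$\mathrm{hrk}(\widetilde N,\Z_2)=2\,\mathrm{hrk}(N,\Z_2)-2\sum_i\rho_i.$$
This at once gives $\mathrm{hrk}(\widetilde N,\Z_2)\leq 2\,\mathrm{hrk}(N,\Z_2)$, with equality if and only if $\rho_i=0$ for every $i$, i.e. if and only if cup product with $\alpha$ annihilates all of $H^{*}(N;\Z_2)$. Taking the class $1\in H^{0}(N;\Z_2)$ then forces $\alpha=1\cup\alpha=0$, so by the first paragraph equality holds exactly when $\xi$ is a trivial double covering.

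I expect no serious obstacle here: the one delicate point is writing the Gysin-type sequence with the connecting homomorphisms correctly identified as cup product with $\alpha$, and keeping the degree shifts consistent. Once that is in place, the inequality and the equality case are forced by the displayed formula. A transfer-homomorphism argument would yield the same count without invoking a spectral sequence, but the Gysin sequence is the most direct route.
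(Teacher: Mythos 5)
Your proof is correct and follows essentially the same route as the paper's: both extract the rank inequality from the $\Z_2$-Gysin sequence of the line bundle classifying the double covering, and both reduce the equality case to the vanishing of cup product with the first Stiefel--Whitney class in every degree. The one small difference is that you close the equality case by evaluating $\cup\,\alpha$ on $1\in H^0(N;\Z_2)$ to get $\alpha=0$ directly, whereas the paper invokes Poincar\'e duality in degree $n-1$; your version is slightly more economical but the argument is the same in substance.
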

         \begin{proof}
          The Gysin sequence of $\xi: \widetilde{N} \rightarrow N$ in $\Z_2$-coefficient reads:
      \[     \cdots \longrightarrow  H^{i-1}(N,\Z_2) \overset{\phi_{i-1}}{\longrightarrow} H^i(N,\Z_2)
            \overset{\xi^*}{\longrightarrow} H^i(\widetilde{N},\Z_2) \longrightarrow H^i(N,\Z_2)
            \overset{\phi_i}{\longrightarrow}  \cdots
                 \]
          where $\phi_i(\gamma) = \gamma \cup e_{\xi},\ \forall\, \gamma \in H^i(N,\Z_2)$ and
         $e_{\xi}\in H^1(N,\Z_2)$ is the first Stiefel-Whitney
         class (Mod 2 Euler class) of $\xi$. Then
          by the exactness of the Gysin sequence, we have:
       \begin{align*}
            \dim_{\Z_2} H^i(\widetilde{N},\Z_2) &= \dim_{\Z_2} H^i(N,\Z_2) -
         \dim_{\Z_2}\text{Im}(\phi_{i-1}) + \dim_{\Z_2}\text{ker}(\phi_i) \\
            &= 2 \cdot \dim_{\Z_2} H^i(N,\Z_2) -  \dim_{\Z_2}\text{Im}(\phi_{i-1}) -
            \dim_{\Z_2}\text{Im}(\phi_i) \\
            & \leq  2 \cdot \dim_{\Z_2} H^i(N,\Z_2)
      \end{align*}
       So $ \mathrm{hrk}(\widetilde{N},\Z_2) \leq 2 \cdot
          \mathrm{hrk}(N,\Z_2)$. If $ \mathrm{hrk}(\widetilde{N},\Z_2)  = 2 \cdot
          \mathrm{hrk}(N,\Z_2)$, then $\text{Im}(\phi_i) =0$ for
          all $i\geq 0$. In this case, we claim the first
          Stiefel-Whitney class $e_{\xi}$  must be zero. Indeed,
          if $e_{\xi}\in H^1(N,\Z_2)$ is not zero, by the Poincar\'e
          duality for $N$, there must be some element $\alpha \in
          H^{n-1}(N,\Z_2)$ where $n$ is the dimension of $N$,
          so that $\alpha\cup e_{\xi}$ is the generator of $H^n(N,\Z_2)\cong \Z_2$.
          This would contradicts $\text{Im}(\phi_{n-1}) =0$.
          Moreover, since the first Stiefel-Whitney class completely classifies
          a double covering, so $e_{\xi}$ is zero will imply that $\xi$ is a trivial
          double covering.
       \end{proof}

       Now, let us come back to the proof of our Theorem~\ref{thm:Main-2}.
        Since
         $$\mathrm{hrk}(K_0,\Z_2) = \mathrm{hrk}(M^n,\Z_2) =
         2^m,\quad
          \mathrm{hrk}(K_{r-m},\Z_2) = \mathrm{hrk}(\R\mathcal{Z}_{\Delta^I},\Z_2) =
       2^r,$$
       so by Lemma~\ref{Lem:Double_Covering},
       we must have
       $\mathrm{hrk}(K_{j+1},\Z_2) = 2\cdot \mathrm{hrk}(K_j,\Z_2)$ for any
       $0\leq j < r-m$ in the sequence~\eqref{Equ:Db-Cov-Seq}.
       Then by Lemma~\ref{Lem:Double_Covering} again,
       each $\eta_j : K_j \rightarrow K_{j-1}$ must be a trivial double covering.
    But this is not possible since each $K_j$ is connected.
    Therefore, the only possibility is that $K_0 = K_{r-m}$, i.e.
    $r=m$ and $M^n\cong \R\mathcal{Z}_{\Delta^I}$. So $M^n$ is homeomorphic to a
    product of spheres $S^{n_1}\times \cdots \times S^{n_r}$ ($r=m$). \hfill
    $\square$
   \ \\

  \begin{rem}
   Most of the results in~\cite{Yu2010-2} and this paper have parallel
   statements for principal real torus bundles over quasitoric manifolds. The ideas
    are similar, though there are some extra
   ingredients in the later case.
  \end{rem}

   \nn

   \n

 \noindent  \textbf{Acknowledgement:}
      The author wants to thank Suyoung Choi for informing his
      result stated in Theorem~\eqref{thm:Polytope} to the author and thank
      Mikiya Masuda for some helpful discussions.

\end{document}